\numberwithin{equation}{section}
\newtheorem{theorem}{Theorem}[section]
\newtheorem{lemma}[theorem]{Lemma}
\newtheorem{proposition}[theorem]{Proposition}
\newtheorem{definition}[theorem]{Definition}
\DeclareMathOperator{\Tr}{Tr}
\DeclareMathOperator{\End}{End}
\DeclareMathOperator{\Span}{Span} 
\DeclareMathOperator{\Diag}{Diag}
\title [A construction by deformation...]{A construction by deformation of unitary irreducible representations of $SU(1,n)$ and $SU(n+1)$}
\author {Benjamin Cahen}
\address{Universit\'e de Lorraine, Site de Metz, UFR-MIM,
D\'epartement de math\'ematiques,
B\^atiment A,
3 rue Augustin Fresnel, BP 45112,
57073 METZ Cedex 03, France.}
\email{benjamin.cahen@univ-lorraine.fr}
\subjclass[2000]{17B10; 17B20; 17B56; 22E46; 53D55} \keywords{Deformation of representation; Lie algebra; unitary group; Chevalley-Eilenberg cohomology; Moyal star product; Weyl correspondence; minimal realization; minimal coadjoint orbit}
\begin{document}

\maketitle

 \dedicatory{To the memory of my father, Alfred Cahen}

\begin{abstract}  We recover the holomorphic discrete series representations of $SU(1,n)$ as well as some unitary irreducible representations of $SU(n+1)$
by deformation of a minimal realization of $sl(n+1,{\mathbb C})$.
\end{abstract}

\vspace{1cm}

\section {Introduction}  \label{sec:int}

The deformations of Lie algebras were intensively studied
in the years 1960-70 \cite{Ger}, \cite{NR1},  \cite{NR2}, \cite{Le1}
and still remain objects of active research, see for instance \cite{Fia1},
\cite{FiaP} and \cite{Bu}. On the other hand, the deformations of Lie algebra representations have not been studied as systematically, with some notable exeptions, see
\cite{NR3}, \cite{Her}, \cite{Le2} and also \cite{LePin}.

These works lead us to the following considerations. Let $\mathfrak g$ be a (real or complex) Lie algebra and let $\pi$ be a representation of $\mathfrak g$.
\begin{enumerate}
\item If $\pi$ admits non-trivial formal deformations then by taking the deformation parameter to be a real or complex number we can expect to get a one-parameter family of representations of $\mathfrak g$;
\item Conversely, given a one-parameter family of representations of $\mathfrak g$, we can expect to recover it by deformation of the representation obtained by taking the value of the parameter to be zero.\end{enumerate}

Then, constructing (formal) deformations of Lie algebra representations appears as a way to derive a family of
representations from a given one  and then to get many representations from a few ones. In particular, we can hope for applications of deformations to the description
of unitary duals of Lie groups.

However, as pointed in \cite{CaBP}, the existence and classification problems for deformations depend on some Lie algebra cohomology modules which are not easy to compute, see for instance 
\cite{Le2} and \cite{CaBP}.

The goal of the present paper is to show how the above mentionned ideas work on a simple but non-trivial example. More specifically, we aim to recover the discrete series representations of $SU(1,n)$ and also the family of unitary
irreducible representations of $SU(n+1)$ considered in \cite{CaJAM} by deforming 
a so-called minimal realization of $sl(n+1,{\mathbb C})$ \cite{Jo74}.

Let us briefly describe our method. It is known that minimal realizations of  
simple complex Lie algebras are related to minimal (non-trivial) nilpotent coadjoint orbits of the corresponding simple Lie groups 
\cite{Jo76}, \cite{ABC}, \cite{KPW}. But we can easily exhibit a very simple parametrization $\Psi$ of the minimal coadjoint orbit of $SL(n+1,{\mathbb C})$ by
complex coordinates $p_1,p_2,\ldots, p_n,q_1,q_2,\ldots, q_n$ so that the coordinate functions $\tilde {X}$, $X\in sl(n+1,{\mathbb C})$, defined by
\begin{equation*}\tilde {X}(p_1,p_2,\ldots, p_n,q_1,q_2,\ldots, q_n)=\langle \Psi(p_1,p_2,\ldots, p_n,q_1,q_2,\ldots, q_n),X\rangle\end{equation*}
constitute a Lie algebra for the usual Poisson brackets, which is isomorphic to 
$sl(n+1,{\mathbb C})$. Then, denoting by $W$ the classical Weyl correspondence,
the map $\rho_0: X\rightarrow W(i\tilde {X})$ is a representation of $sl(n+1,{\mathbb C})$ which is also a minimal realization of $sl(n+1,{\mathbb C})$
in the sense of \cite{Jo74}. We can thus compute the formal deformations of $\rho_0$, the calculations being simplified by the use of the Moyal star product
as in \cite{ABC} and \cite{CaBP}. By this way, we recover the one-parameter family
of representations of $sl(n+1,{\mathbb C})$ given in \cite{Jo74}. Moreover, by restricting these representations to $su(1,n)$ and $su(n+1)$ and by selecting values of the parameter we obtain the representations of $SU(1,n)$ and $SU(n+1)$
mentionned above.

We would like to emphasize the fact that, despite of technicalities, our method is very simple: by deforming a given representation of $sl(n+1,{\mathbb C})$ (the minimal realization), we get a family of representations of $sl(n+1,{\mathbb C})$
which gives in turn, by restriction an integration, the desired representations of
$SU(1,n)$ and $SU(n+1)$.

Moreover, we could hope for applications of this method to the description of representations of Lie algebras (in particular of unitary dual of simple Lie groups) in more
general situations (some examples can already be found in \cite{LePin} and \cite{CaBP}).

This paper is organized as follows. In Section \ref{sec:2} and Section \ref{sec:3}, we describe the holomorphic discrete series representations of $SU(1,n)$ and the family of unitary
irreducible representations of $SU(n+1)$ which was introduced in \cite{CaJAM} as an analogue to the holomorphic discrete series representations of $SU(1,n)$ and we compute their differentials which can be extended to representations of $sl(n+1, {\mathbb C})$. Section \ref{sec:4} is devoted to some generalities on (formal) deformations of Lie algebra homomorphisms and, in Section \ref{sec:5}, we recall the Moyal star product and the Weyl correspondence \cite{Fo}, \cite{Vo}. In Section \ref{sec:6}, we show how a symplectic chart of the minimal nilpotent coadjoint orbit of $sl(n+1, {\mathbb C})$ naturally leads to a minimal realization of $sl(n+1, {\mathbb C})$.
In Section \ref{sec:7}, we compute the first cohomology module corresponding to the deformation of the minimal realization and then we derive the desired representations of $SU(1,n)$ and $SU(n+1)$ in Section \ref{sec:8}. In particular, by this way we can recover all the irreducible unitary representations of $SU(2)$.

\section {Discrete series representations of $SU(1,n)$}  \label{sec:2}

The group $SU(1,n)$ consists of
all complex $(n+1)\times (n+1)$ matrices $g$ with determinant $1$
such that 
\begin{equation*} g^{\ast} \left(\begin{matrix}-1&0\\ 0&I_n\end{matrix} \right)\,g=\left(\begin{matrix}-1&0\\ 0&I_n\end{matrix} \right)\end{equation*}
where $g^{\ast}=\bar {g}^t$ denotes the conjugate transpose of $g$.

The group $G$ acts holomorphically on the unit ball
\begin{equation*}{\mathbb B}=
 \{z=(z_1,z_2,\ldots , z_n)\in {\mathbb C}^n:\Vert
z\Vert^2 :=\vert z_1\vert^2+\vert z_2\vert^2+\cdots +\vert
z_n\vert^2 =1\}\subset {\mathbb C}^n \end{equation*}
by fractional linear transformations. Indeed, if $g\in G$ is of the form
\begin{equation*}g= \left(\begin{matrix}a&b\\ c&d\end{matrix}\right)\end{equation*}
with matrices $a(1\times 1)$, $b(1\times n)$, $c(n\times 1)$ and
$d(n\times n)$, then the action of $g$ on $z\in {\mathbb B}$ is defined by
\begin{equation*}g\cdot z=(a+bz^t)^{-1}(c+dz^t)^t.\end{equation*}
Here the subscript $t$ denotes transposition.

A $G$-invariant measure on $\mathbb B$ is
\begin{equation*}d\mu(z)=(1-\Vert z\Vert^2)^{-(n+1)}dx_1dy_1\ldots dx_ndy_n.\end{equation*}
Here we use the notation  $z=(x_1+iy_1,x_2+iy_2,\ldots
, x_n+iy_n)$ where $x_k,y_k\in {\mathbb R}$ for $k=1,2,\ldots ,n$.

For each integer $m>n$, we can consider the Hilbert space ${\mathcal H}_m$ of
all holomorphic functions $f$ on $\mathbb B$ such that
\begin{equation*}\Vert f \Vert_m^2 :=\tfrac{m(m-1)\ldots (m-n)}{{\pi}^{n}}\int_{\mathbb B}\,\vert f(z)\vert^2\, (1-\Vert
z\Vert^2)^{m-n-1}d\mu(z)\, <
\infty. \end{equation*}

Let us consider the representation $\sigma_m$ of $SU(1,n)$ on ${\mathcal
H}_m$ defined by
\begin{equation*}(\sigma_m(g)\,f)(z)=(bz^t+a)^{-m}f(g^{-1}\cdot z),\quad
g^{-1}=\left(\begin{matrix}a&b\\
c&d\end{matrix}\right).\end{equation*}
Then $\sigma_m$ lies in the discrete series representation of $SU(1,n)$, see for instance \cite{Kn}.

The Lie algebra $su(1,n)$ of $SU(1,n)$ consists of all matrices of the
form
\begin{equation*} \left(\begin{matrix}i\alpha& b\\ b^{\ast}& A \end{matrix}\right)\end{equation*}
where $\alpha \in {\mathbb R}$, $b \in {\mathbb C}^n$ and $A$ is an
anti-Hermitian $n\times n$ matrix (that is, $A^{\ast}=-A$) such that
$i\alpha+\Tr (A)=0$.

We extend the differential $d\sigma_m$
of $\sigma_m$ to a representation of $sl(n+1,{\mathbb C})=su(n+1)^{\mathbb C}$ also denoted by $d\sigma_m$. We have
\begin{equation*}(d\sigma_m(X)f)(z)=m(\beta z^t+\alpha)f(z)+df(z)((\alpha+\beta z^t)z-(\gamma+\delta z^t)^t)\end{equation*}
for
\begin{equation*}X= \left(\begin{matrix}\alpha&\beta\\ \gamma&\delta\end{matrix} \right)\in sl(n+1,{\mathbb C})\end{equation*}
with matrices $\alpha(1\times 1)$, $\beta(1\times n)$, $\gamma(n\times 1)$ and
$\delta(n\times n)$.

In order to give more explicit formulas for $d\sigma_m$, let us introduce the following
basis of $sl(n+1,{\mathbb C})$. For $1\leq i,\,j\leq n+1$, we write $E_{ij}$ for the matrix whose $ij$-th entry is
$1$ and all of the other entries are $0$. Then the matrices
$H_{k}=E_{k+1 k+1}-E_{11}\quad (1\leq k\leq n)$ form a basis for the Cartan 
subalgebra
${\mathfrak h}$ of $sl(n+1,{\mathbb C})$ consisting of all diagonal matrices of
$sl(n+1,{\mathbb C})$ and, obviously, the matrices
$H_k\quad  (1\leq i\leq n)$ and $E_{ij}\quad (1\leq i\not= j\leq n+1)$ form
a basis for $sl(n+1,{\mathbb C})$. Then we have
\begin{align*}(d\sigma_m(H_k)f)(z)=&-mf(z)-z_k\frac{\partial f}{\partial z_k}-\sum_{j=1}^nz_j\frac{\partial f}{\partial z_j}\\
(d\sigma_m(E_{1 k+1})f)(z)=&mz_kf(z)+z_k\sum_{j=1}^n z_j\frac{\partial f}{\partial z_j}\\
(d\sigma_m(E_{ k+1 1})f)(z)=&-\frac{\partial f}{\partial z_k}\\
(d\sigma_m(E_{i+1j+1})f)(z)=&-z_j\frac{\partial f}{\partial z_i}\\
\end{align*}
for $1\leq k\leq n$ and $1\leq i\not=j\leq n$.

More generally, for each $\lambda \in {\mathbb C}$, let us consider
the representation $\rho^{\lambda}$ of $sl(n+1,{\mathbb C})$ on the space
$\mathcal P$ of all complex polynomials on ${\mathbb C}^n$ defined by

\begin{align*}(\rho^{\lambda}(H_k)f)(z)=&-{\lambda}f(z)-z_k\frac{\partial f}{\partial z_k}-\sum_{j=1}^nz_j\frac{\partial f}{\partial z_j}\\
(\rho^{\lambda}(E_{1 k+1})f)(z)=&{\lambda}z_kf(z)+z_k\sum_{j=1}^n z_j\frac{\partial f}{\partial z_j}\\
(\rho^{\lambda}(E_{ k+1 1})f)(z)=&-\frac{\partial f}{\partial z_k}\\
(\rho^{\lambda}(E_{i+1j+1})f)(z)=&-z_j\frac{\partial f}{\partial z_i}\\
\end{align*}
for $1\leq k\leq n$ and $1\leq i\not=j\leq n$.

The following result shows how one can recover $(\sigma_m, {\mathcal H}_m)$
from the representations $\rho^{\lambda}$, ${\lambda}\in {\mathbb C}$.

\begin{proposition} \label{prop:21} Let $\langle \cdot,\cdot \rangle_{\lambda}$ be scalar product on $\mathcal P$ for which the operators $\rho^{\lambda}(X)$, $X\in sl(n+1,{\mathbb C})$, are skew-adjoint, that is, such that
\begin{equation} \label{eq:skew}\langle \rho^{\lambda}(X)f_1 ,f_2 \rangle_{\lambda}+ \langle f_1 , \rho^{\lambda}(X)f_2 \rangle_{\lambda}=0 \end{equation} for each $X\in sl(n+1,{\mathbb C})$ and each $f_1,f_2\in {\mathcal P}$. Then we have $\lambda \notin
-{\mathbb N}$ and there exists a constant $C>0$ such that
\begin{equation*} \langle z^p,z^q \rangle_{\lambda}=C\delta_{pq}\frac {p!}{\lambda(\lambda+1)\ldots (\lambda +\vert p\vert -1)}\end{equation*}
for each $p,q\in {\mathbb N}^n$.

If, moreover, $\rho^{\lambda}$ can be integrated to a representation $\sigma^{\lambda}$ of $SU(1,n)$ on the Hilbert space ${\mathcal H}^{\lambda}$
which is the completion of $\mathcal P$ for the norm associated with  $\langle \cdot,\cdot \rangle_{\lambda}$, then $\lambda$ is an integer $m$ and $\sigma^{\lambda}$ is unitarily equivalent to $\sigma_m$. \end{proposition}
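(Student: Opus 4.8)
The plan is to turn the skew-adjointness hypothesis into a complete set of relations among the numbers $a_p:=\langle z^p,z^p\rangle_{\lambda}$ together with the off-diagonal products $\langle z^p,z^q\rangle_{\lambda}$, to solve the resulting recursion, and then to feed in the global hypothesis of the second statement to force integrality. Throughout, $e_k$ denotes the $k$-th standard multi-index and I abbreviate $\vert p\vert=p_1+\cdots+p_n$. First I would prove that distinct monomials are orthogonal. Since $iH_k\in su(1,n)\subset sl(n+1,\mathbb C)$, relation \eqref{eq:skew} applied to $iH_k$ shows that each $\rho^{\lambda}(H_k)$ is self-adjoint for $\langle\cdot,\cdot\rangle_{\lambda}$. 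From the explicit formula $z^p$ is a joint eigenvector, $\rho^{\lambda}(H_k)z^p=-(\lambda+p_k+\vert p\vert)z^p$, and two monomials share all $n$ eigenvalues iff $p_k+\vert p\vert=q_k+\vert q\vert$ for every $k$; summing over $k$ gives $(n+1)\vert p\vert=(n+1)\vert q\vert$, hence $\vert p\vert=\vert q\vert$ and then $p=q$. Thus distinct monomials are eigenvectors of the commuting self-adjoint family $\{\rho^{\lambda}(H_k)\}$ with distinct joint eigenvalues, so $\langle z^p,z^q\rangle_{\lambda}=0$ for $p\neq q$, which is the factor $\delta_{pq}$.

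Next I would compute the diagonal values. Applying \eqref{eq:skew} to the elements $E_{1\,k+1}+E_{k+1\,1}$ and $i(E_{1\,k+1}-E_{k+1\,1})$ of $su(1,n)$ and taking the sum and difference of the two resulting identities yields the adjoint relation $\rho^{\lambda}(E_{1\,k+1})^{\ast}=-\rho^{\lambda}(E_{k+1\,1})$. Pairing $\rho^{\lambda}(E_{1\,k+1})z^p=(\lambda+\vert p\vert)z^{p+e_k}$ against $z^{p+e_k}$ and moving the operator across, and using $\rho^{\lambda}(E_{k+1\,1})z^{p+e_k}=-(p_k+1)z^p$, gives
\[
(\lambda+\vert p\vert)\,a_{p+e_k}=(p_k+1)\,a_p .
\]
The function $a_p=C\,p!/(\lambda(\lambda+1)\cdots(\lambda+\vert p\vert-1))$ manifestly solves every instance of this relation, and induction on $\vert p\vert$ shows it is the unique solution with $a_0=\langle 1,1\rangle_{\lambda}=:C$; this is the asserted formula. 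If $\lambda=-m\in-\mathbb N$, the factor $\lambda+\vert p\vert$ vanishes at level $\vert p\vert=m$, forcing $a_p=0$ there and, by comparison with the value built up from $a_0$, forcing $C=0$ and a degenerate form; this contradicts non-degeneracy, so $\lambda\notin-\mathbb N$ and $C=\Vert 1\Vert_{\lambda}^{2}>0$.

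For the second statement I would argue as follows. Since $\mathcal H^{\lambda}$ is a genuine Hilbert space, $a_p>0$ for all $p$, whence $\lambda(\lambda+1)\cdots(\lambda+\vert p\vert-1)>0$ for every $\vert p\vert$; taking $\vert p\vert=1$ gives $\lambda>0$. Integrality comes from the global structure: the constant $1$ is a lowest weight vector, being annihilated by every $\rho^{\lambda}(E_{k+1\,1})$, and $\rho^{\lambda}(iH_k)\,1=-i\lambda\,1$. The subgroup $t\mapsto\exp(t\,iH_k)$ is a circle in $SU(1,n)$ with $\exp(2\pi iH_k)=I$, so single-valuedness of $\sigma^{\lambda}$ forces $\sigma^{\lambda}(\exp(2\pi iH_k))1=e^{-2\pi i\lambda}1=1$, i.e. $\lambda\in\mathbb Z$. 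Hence $\lambda=m$ is a positive integer. Comparing the displayed formulas shows $\rho^{m}=d\sigma_m$; the inner product of $\mathcal H_m$, restricted to the dense subspace $\mathcal P$, also makes $d\sigma_m$ skew-adjoint on $su(1,n)$, so by the uniqueness established in the first part it is a positive multiple of $\langle\cdot,\cdot\rangle_{\lambda}$. Therefore the identity map on $\mathcal P$ extends to a unitary isomorphism $\mathcal H^{\lambda}\to\mathcal H_m$ intertwining $\sigma^{\lambda}$ and $\sigma_m$.

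The main obstacle is the passage from the infinitesimal to the global in the second statement: converting the mere existence of an integrating group representation into the integrality $\lambda\in\mathbb Z$ requires locating a compact one-parameter subgroup on which a lowest weight vector must transform by a single-valued character, and combining this with the positivity constraint; by contrast the recursion of the first part is essentially forced and mechanical. A secondary point to treat with care is the condition $m>n$ implicit in the definition of $\sigma_m$, which should be traced to square-integrability, that is, to $\rho^{m}$ genuinely integrating to a unitary representation on the completion rather than merely acting skew-symmetrically on $\mathcal P$.
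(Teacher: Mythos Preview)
Your proof is correct and follows essentially the same route as the paper's: orthogonality of monomials via the Cartan elements $H_k$, the recursion $(\lambda+|p|)\,a_{p+e_k}=(p_k+1)\,a_p$ via the raising/lowering pair $E_{1\,k+1},E_{k+1\,1}$, and integrality of $\lambda$ from the periodicity of $t\mapsto\exp(itH_1)$ acting on the vacuum vector $1$. The paper derives the recursion by plugging $X=E_{1\,k+1}+E_{k+1\,1}$ directly into \eqref{eq:skew} (with $f_1=z^p$, $f_2=z^{p+e_k}$, in effect), whereas you first isolate the adjoint relation $\rho^{\lambda}(E_{1\,k+1})^{\ast}=-\rho^{\lambda}(E_{k+1\,1})$ from the real-form elements $E_{1\,k+1}+E_{k+1\,1}$ and $i(E_{1\,k+1}-E_{k+1\,1})$; this is slightly more careful since the skew-adjointness hypothesis really only makes sense for $X\in su(1,n)$ once the form is Hermitian. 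For the unitary equivalence the paper quotes the explicit orthonormal basis of $\mathcal H_m$, while you invoke the uniqueness of the invariant inner product up to a positive scalar proved in the first part; these are equivalent. Your closing remarks on positivity ($\lambda>0$) and the threshold $m>n$ go a step beyond what the paper actually proves, and you are right to flag them.
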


\begin{proof} Let $(e_1,e_2,\ldots ,e_n)$ be the canonical basis of ${\mathbb C}^n$. For each $p\in {\mathbb N}^n$, let $\alpha(p):=\langle z^p,z^p \rangle_{\lambda}$ and let $C:=\alpha (0)$. Applying Eq. \ref{eq:skew}
to $X=H_k$, $k=1,2,\ldots, n$, $f_1=z^p$ and $f_2=z^q$ where $p\not= q$,
we get $\langle z^p,z^q \rangle_{\lambda}=0$. Also, applying Eq. \ref{eq:skew} to $X=E_{1k+1}+E_{k+11}$ and $f_1=f_2=z^p$, we obtain the relation
\begin{equation*}(\lambda +\vert p \vert )\alpha (p+e_k)=(p_k+1)\alpha (p)\end{equation*} for each $p\in {\mathbb N}^n$. This shows that 
$\lambda \notin
-{\mathbb N}$ and, by induction, we obtain
\begin{equation} \label{eq:alpha} \alpha (p)=C\frac {p!}{\lambda(\lambda+1)\ldots (\lambda +\vert p\vert -1)}\end{equation}
for each $p\in {\mathbb N}^n$. Then we have proved the first assertion of the proposition. For the second assertion, note that 
\begin{equation*}\sigma^{\lambda}(\exp (itH_1))1=\exp(t\rho^{\lambda}(iH_1))1=e^{-i\lambda t}\end{equation*} and
\begin{equation*}\exp (itH_1)=\Diag(e^{-i\lambda t},e^{i\lambda t},1,\ldots,1).\end{equation*} Thus $\lambda$ must be an integer $m$. Finally, taking into account Eq. \ref{eq:alpha} and the fact that ${\mathcal H}_m$ has orthonormal basis
\begin{equation*}f_p(z)=\left(\frac{(m+\vert p\vert-1) !}{ (m-1) !\,p !}\right)^{1/2}z^p,\quad p\in {\mathbb N}^n,\end{equation*} see for instance \cite{CaJAM},
we see that $f\rightarrow C^{1/2}f$ is a (unitary) intertwining operator
between $\sigma_m$ and $\sigma^m$.
\end{proof}

\section {Unitary irreducible representations of $SU(n+1)$}  \label{sec:3}

Here we consider a family of representations of $SU(n+1)$ indexed by an integer $m \geq 1$ which is analogous to the discrete series of $SU(1,n)$. In \cite{CaJAM}, we showed that this family can be contracted to the
unitary irreducible representations of the Heisenberg group of dimension $2n+1$
as the holomorphic discrete series representations of $SU(1,n)$.

The group $SU(n+1)$ consists of
all complex $(n+1)\times (n+1)$ matrices $g$ with determinant $1$
such that $ g^{\ast}\,g=I_{n+1}$. Here we write the
elements of the group $SU(n+1)$ as block matrices
\begin{equation*}g= \left(\begin{matrix}a&b\\ c&d\end{matrix} \right)\end{equation*}
with matrices $a(1\times 1)$, $b(1\times n)$, $c(n\times 1)$ and
$d(n\times n)$.

The group $SU(n+1)$ acts naturally on the projective space ${\mathbb
P}_n({\mathbb C})$ and this action induces an holomorphic action (defined
almost everywhere) of $SU(n+1)$ on ${\mathbb C}^n$ by fractional linear
transformations
\begin{equation*} g\cdot z=(a+bz^t)^{-1}(c+dz^t)^t,\quad \quad g= \left(\begin{matrix}a&b\\
c&d\end{matrix}\right).\end{equation*} 

For each integer $m\geq 1$, let ${\mathcal P}_m$ be the space of
all complex polynomial functions on ${\mathbb C}^n$ of degree $\leq m$. 
We endow
${\mathcal P}_m$ with the Hilbert product
\begin{equation*}\langle f_1,f_2\rangle_m:=\tfrac{ (m+1)\ldots (m+n)} {{\pi}^{n}}\int_{{\mathbb C}^n}\, f_1(z)\overline { f_2(z)}
(1+\Vert z\Vert^2)^{-m-n-1}
dx_1dy_1\ldots dx_ndy_n.\end{equation*}

Let $\pi_m$ be the representation of $SU(n+1)$ on ${\mathcal
P}_m$ defined by
\begin{equation*}(\pi_m(g)\,f)(z)=(bz^t+a)^{m}f(g^{-1}\cdot z),\quad
g^{-1}=\left(\begin{matrix}a&b\\
c&d\end{matrix}\right).\end{equation*}
We can easily verify that $\pi_m$ is unitary.

The Lie algebra $su(n+1)$ of $SU(n+1)$ consists of all matrices of the
form
\begin{equation*} \left(\begin{matrix}i\alpha& b\\ -b^{\ast}& A \end{matrix}\right)\end{equation*}
where $\alpha \in {\mathbb R}$, $b \in {\mathbb C}^n$ and $A$ is an
anti-Hermitian $n\times n$ matrix such that
$i\alpha+\Tr (A)=0$. 

The differential $d\pi_m$
of $\pi_m$ can be extended to a representation of $sl(n+1,{\mathbb C})$ also denoted by $d\pi_m$. We have
\begin{equation*}(d\pi_m(X)f)(z)=-m(\beta z^t+\alpha)f(z)+df(z)((\alpha+\beta z^t)z-(\gamma+\delta z^t)^t)\end{equation*}
where 
\begin{equation*}X= \left(\begin{matrix}\alpha&\beta\\ \gamma&\delta\end{matrix} \right)\end{equation*}
with matrices $\alpha(1\times 1)$, $\beta(1\times n)$, $\gamma(n\times 1)$ and
$\delta(n\times n)$.

More precisely, we have
\begin{align*}(d\pi_m(H_k)f)(z)=&mf(z)-z_k\frac{\partial f}{\partial z_k}-\sum_{j=1}^nz_j\frac{\partial f}{\partial z_j}\\
(d\pi_m(E_{1 k+1})f)(z)=&-mz_kf(z)+z_k\sum_{j=1}^n z_j\frac{\partial f}{\partial z_j}\\
(d\pi_m(E_{ k+1 1})f)(z)=&-\frac{\partial f}{\partial z_k}\\
(d\pi_m(E_{i+1j+1})f)(z)=&-z_j\frac{\partial f}{\partial z_i}\\
\end{align*}
for $1\leq k\leq n$ and $1\leq i\not=j\leq n$.

We can easily see that $d\pi_m$-hence $\pi_m$- is irreducible. Indeed, let ${\mathcal V}$
be a nonzero subspace of ${\mathcal P}_m$ which is invariant under $d\pi_m(X)$ for each
$X\in sl(n+1,{\mathbb C})$. Then there exists at least one nonzero element $f$ in ${\mathcal V}$. Thus, by applying
the operators $d\pi_m(E_{ k+1 1})$ to $f$, we get $1\in {\mathcal V}$
and by applying the operators $d\pi_m(E_{1 k+1})$ and $d\pi_m(E_{i+1j+1})$ to $1$ we see that ${\mathcal V}={\mathcal P}_m$.

Let us denote by $\epsilon_k$, $1\leq k\leq n$, the linear form on $\mathfrak h$ defined by
\begin{equation*}\epsilon_k: \Diag(a_1,a_2,\ldots,a_{n+1}) \rightarrow a_k. \end{equation*}
It is well-known that the root system of $sl(n+1,{\mathbb C})$ relative to $\mathfrak h$ is 
\begin{equation*}\Delta=\{\epsilon_i-\epsilon_j\,:\,1\leq i,j\leq n+1\},\end{equation*}
see for instance \cite{GoodW}. The ordering on $\Delta$ is usually taken so that the positive roots are $\epsilon_i-\epsilon_j$ $(1\leq i<j\leq n+1)$. In this context, we
can verify that $d\pi_m$ has highest weight $m\epsilon_1$ and highest weight vector $f=z_n^m$.

\section {Generalities on deformations}  \label{sec:4}

In this section, we recall some definitions and results of deformation theory. The material
of this section is essentially taken from \cite{NR3}, \cite{Her}, \cite{Le2}, see also \cite{Gui} and \cite{CaBP}.

Let $\mathfrak g$ be a Lie algebra over $\mathbb C$ and let $A$ be an associative algebra
over $\mathbb C$ with unit element $1$. Then $A$ is also a Lie algebra for the commutator
$[a,b]:=ab-ba$. Let $\varphi: {\mathfrak g}\rightarrow A$ be a Lie algebra homomorphism.

\begin{definition} \begin{enumerate} 
\item A formal deformation of $\varphi$ is a formal series $\Phi=\sum_{k\geq 0}t^k\Phi_k$ where $\Phi_0=\varphi$ and $\Phi_k$ is a linear map from $\mathfrak g$ to $A$ for each $k \geq 1$, such that 
\begin{equation}\label{eqDef}\Phi ([X,Y])=[\Phi(X),\Phi(Y)]\end{equation} 
for each $X$ and $Y$ in $\mathfrak g$. Here we have extended the bracket of $A$ to formal series by bilinearity.

\item Two formal deformations $\Phi$ and $\Psi$ of $\varphi$ are said to be equivalent if there exists a series $a=1+ta_1+t^2a_2+\ldots \in A[[t]]$ such that for each $X\in {\mathfrak g}$, we have 

\begin{equation}a^{-1}\Phi(X)a=\Psi(X).\end{equation}
\end{enumerate}\end{definition}

The study of the formal deformations of $\varphi$ naturally leads us to consider
the structure of $\mathfrak g$-module on $A$ defined by $X\cdot a=[\varphi(X),a]$ for
$X\in {\mathfrak g}$ and $a\in A$ and the Chevalley-Eilenberg cohomology of $\mathfrak g$
with values in the $\mathfrak g$-module $A$. Indeed, denoting by $\partial$ the corresponding cobord
operator, we immediately see that Eq. \ref{eqDef} is equivalent to the fact that for each $n\geq 0$ and each $X,Y \in {\mathfrak g}$, we have
\begin{align*}(\partial \Phi_n)[X,Y]:=&[\varphi (X),\Phi_n(Y)]+[\Phi_n(X),\varphi(Y)]-\Phi_n([X,Y])\\=&-\sum_{k=1}^{n-1}[\Phi_k(X),\Phi_{n-k}(Y)].\end{align*}

In particular, we see that if such a deformation $\Phi$ exists then $\Phi_1$ is a $1$-cocycle.

We have the following result, see for instance \cite{Her}, Section III and \cite{Le2}, Section I.

\begin{proposition}\begin{enumerate}
\item If we have $H^2({\mathfrak g},A)=(0)$ then, for each $1$-cocycle $\alpha:
{\mathfrak g}\rightarrow A$, there exists a formal deformation $\Phi$ such that $\Phi_1=\alpha$.
\item If we have $H^1({\mathfrak g},A)=(0)$ then each formal deformation $\Phi$ of $\varphi$ is equivalent to $\varphi$.  \end{enumerate} \end{proposition}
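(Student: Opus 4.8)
The plan is to prove both statements by induction on the order, exploiting the cohomological reformulation of the deformation equation recalled just above the statement: collecting the coefficient of $t^n$ in Eq.~\ref{eqDef} shows that a family $(\Phi_k)$ satisfies the deformation equation modulo $t^{n+1}$ if and only if, for every $X,Y\in\mathfrak g$,
\begin{equation*}(\partial \Phi_n)(X,Y)=-\sum_{k=1}^{n-1}[\Phi_k(X),\Phi_{n-k}(Y)]=:F_n(X,Y),\end{equation*}
where $\partial$ is the Chevalley--Eilenberg coboundary attached to the $\mathfrak g$-module structure $X\cdot a=[\varphi(X),a]$ on $A$.

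For part (1), I would build the linear maps $\Phi_n$ one at a time. Set $\Phi_0=\varphi$ and $\Phi_1=\alpha$; since $\alpha$ is a $1$-cocycle and the sum defining $F_1$ is empty, the order-one relation $\partial\Phi_1=0$ holds. Assuming $\Phi_0,\dots,\Phi_{n-1}$ have been constructed so that Eq.~\ref{eqDef} holds modulo $t^n$, the $2$-cochain $F_n$ is well defined, and the crucial point is that it is automatically a $2$-\emph{cocycle}. Granting this, the hypothesis $H^2(\mathfrak g,A)=(0)$ provides a $1$-cochain $\Phi_n$ with $\partial\Phi_n=F_n$, which completes the inductive step and yields the desired deformation with $\Phi_1=\alpha$. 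The hard part is precisely the verification that $\partial F_n=0$: one expands $\partial F_n$ on three arguments, regroups the brackets using the Jacobi identity in $A$, and substitutes the lower-order relations $\partial\Phi_k=F_k$ $(k<n)$ to obtain cancellation. This is the classical ``the obstruction is a cocycle'' computation of deformation theory, and it is where all the genuine work lies.

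For part (2), given a formal deformation $\Phi$ I would remove its higher-order terms by successive conjugations. As noted before the statement, $\Phi_1$ is a $1$-cocycle; since $H^1(\mathfrak g,A)=(0)$ it is a coboundary, say $\Phi_1(X)=[\varphi(X),a_1]$, and a direct first-order expansion shows that conjugation by $1-ta_1$ produces an equivalent deformation whose order-one term vanishes. The inductive step is identical in spirit: if $\Phi$ is a deformation with $\Phi_1=\dots=\Phi_{n-1}=0$, then the order-$n$ equation collapses to $\partial\Phi_n=0$, so $\Phi_n$ is a $1$-cocycle, hence a coboundary $\Phi_n(X)=[\varphi(X),b_n]$, and conjugation by $1-t^nb_n$ cancels the order-$n$ term while leaving the lower orders untouched. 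Iterating, the infinite product of these elementary conjugations converges in the $t$-adic topology of $A[[t]]$ to an element $a=1+ta_1+t^2a_2+\dots$ with $a^{-1}\Phi(X)a=\varphi(X)$, which is the required equivalence.

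Thus the only substantial obstacle is the cocycle identity for the obstruction $F_n$ in part (1); part (2) reduces to a routine order-by-order conjugation once one observes that each successive obstruction to equivalence is a class in $H^1(\mathfrak g,A)$, which vanishes by hypothesis.
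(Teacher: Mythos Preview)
Your sketch is correct and follows the classical Gerstenhaber--Nijenhuis--Richardson argument: build $\Phi_n$ inductively by showing the obstruction $F_n$ is a $2$-cocycle (hence exact when $H^2=(0)$), and for part (2) kill the successive leading terms by conjugations $1-t^nb_n$, using that each $\Phi_n$ becomes a $1$-cocycle, hence a coboundary, when the lower terms vanish. Note that the paper itself gives \emph{no} proof of this proposition: it simply refers to \cite{Her}, Section~III and \cite{Le2}, Section~I. What you have outlined is exactly the standard argument found in those references, so there is nothing to compare beyond observing that you have supplied the details the paper chose to omit.
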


In \cite{CaBP}, we proved the following result.

\begin{proposition} Assume that $H^1({\mathfrak g},A)$ is one-dimensional and that there exists a formal deformation $\Phi$ of $\varphi$ such the class of $\Phi_1$ generates 
$H^1({\mathfrak g},A)$. For each sequence $c=(c_k)_{k\geq 1}$ of complex numbers, consider the formal series
$S_c(t):=\sum_{k\geq 1}c_kt^k$ and the formal deformation $\Phi^c$ of $\varphi$
defined by $\Phi^c(X)=\sum_{r\geq 0}S_c(t)^r\Phi_r(X)$ for each $X\in {\mathfrak g}$.

Then the map $c\rightarrow \Phi^c$ is a bijection from the set of all sequences $c=(c_k)_{k\geq 1}$ of $\mathbb C$ onto the set of all equivalence classes of formal deformations of $\varphi$. \end{proposition}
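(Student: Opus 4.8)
The plan is to establish the two inclusions separately, after recording that each $\Phi^c$ is a genuine formal deformation. Since $S_c(t)=\sum_{k\ge 1}c_kt^k$ has no constant term, the substitution $t\mapsto S_c(t)$ is a well-defined continuous endomorphism of $A[[t]]$, and $\Phi^c(X)=\sum_{r\ge 0}S_c(t)^r\Phi_r(X)$ is nothing but $\Phi(X)$ with $t$ replaced by $S_c(t)$. Applying this substitution to the identity $\Phi([X,Y])=[\Phi(X),\Phi(Y)]$ shows that $\Phi^c([X,Y])=[\Phi^c(X),\Phi^c(Y)]$, while the constant term is $\Phi_0=\varphi$; hence $\Phi^c$ is a formal deformation and $c\mapsto\Phi^c$ is well defined. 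The identification $c\leftrightarrow S_c$ will make the bookkeeping transparent.

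For surjectivity I would start from an arbitrary formal deformation $\Psi$ and construct $c$ together with an equivalence by induction on the order. Assume that, after conjugation, $\Psi$ agrees with $\Phi^c$ through order $N-1$ for some already fixed $c_1,\dots,c_{N-1}$, and set $R_N:=\Psi_N-\Phi^c_N$. The cocycle relation recalled above, namely $(\partial\Phi_n)(X,Y)=-\sum_{k=1}^{n-1}[\Phi_k(X),\Phi_{n-k}(Y)]$, applied to both $\Psi$ and $\Phi^c$, shows that their right-hand sides agree in degrees below $N$, so $\partial R_N=0$. Using $H^1(\mathfrak g,A)=\mathbb C\,[\Phi_1]$ I then write $R_N=\gamma\Phi_1+\partial b$ with $\gamma\in\mathbb C$ and $b\in A$ both determined by $R_N$. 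A short expansion shows that replacing $c_N$ by $c_N+\gamma$ changes $\Phi^c_N$ by exactly $\gamma\Phi_1$ and leaves all lower orders unchanged — the coefficient of $t^N$ in $S_c^r\Phi_r$ with $r\ge 2$ involves only $c_1,\dots,c_{N-1}$ — while conjugating $\Psi$ by $1-t^Nb$ subtracts $\partial b$ from $\Psi_N$ without touching orders below $N$. After these two moves $\Psi$ agrees with $\Phi^c$ through order $N$, and the accumulated conjugations converge $t$-adically to a single equivalence witnessing $\Psi\sim\Phi^c$.

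For injectivity, suppose $\Phi^c\sim\Phi^{c'}$ through an equivalence $a=1+\sum_{k\ge 1}t^ka_k$, and let $N$ be the least index with $c_N\neq c_N'$. Then $\Phi^c$ and $\Phi^{c'}$ agree through order $N-1$ and $R_N:=\Phi^c_N-\Phi^{c'}_N=(c_N-c_N')\Phi_1$. Extracting the coefficient of $t^N$ from the exact identity $\Phi^c(X)\,a=a\,\Phi^{c'}(X)$ yields $R_N=-\partial a_N-\sum_{i=1}^{N-1}[\Phi^c_i(X),a_{N-i}]$, and the same extraction in degrees $M<N$ gives $\partial a_M=-\sum_{i=1}^{M-1}[\Phi^c_i(X),a_{M-i}]$; in particular $a_1$ centralizes $\varphi(\mathfrak g)$. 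Thus it suffices to prove that the correction sum $\sum_{i=1}^{N-1}[\Phi^c_i(X),a_{N-i}]$ is a coboundary: then $[R_N]=0$ in the one-dimensional $H^1(\mathfrak g,A)$, forcing $c_N=c_N'$ and contradicting the choice of $N$. The natural route is to factor $a=b\,w$ with $b=\sum_{j=0}^{N-1}t^ja_j$ and $w=1+O(t^N)$; the lower-order relations say precisely that $b$ conjugates $\Phi^c$ to itself modulo $t^N$, so if $b$ extends to a genuine automorphism $v$ of $\Phi^c$ with $v\equiv b\pmod{t^N}$, then $\tilde a:=v^{-1}a=1+O(t^N)$ is again an equivalence, the correction sum evaluated with $\tilde a$ vanishes, and $R_N=-\partial\tilde a_N$ is a coboundary.

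The step I expect to be the main obstacle is exactly this vanishing — equivalently, the claim that the centralizer $H^0(\mathfrak g,A)$ of $\varphi(\mathfrak g)$ acts trivially on the line $H^1(\mathfrak g,A)=\mathbb C\,[\Phi_1]$, i.e. that $X\mapsto[\Phi_1(X),v]$ is a coboundary for every $v$ with $[\varphi(X),v]=0$. Indeed, were this action nontrivial, a conjugation by $1+tv$ would mimic a nonzero reparametrization, so some care is unavoidable here; I would prove the vanishing by the obstruction-theoretic induction sketched above, feeding on the solvability of the lower-order equations, and in the situation of interest it also follows directly from the fact that the centralizer of $\varphi(\mathfrak g)$ reduces to the scalars, for which $[\Phi_1(\cdot),v]=0$ outright. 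Granting this, surjectivity and injectivity together give the asserted bijection $c\mapsto\Phi^c$.
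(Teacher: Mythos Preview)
The paper does not actually prove this proposition: it is quoted from \cite{CaBP} with the sentence ``In \cite{CaBP}, we proved the following result'' and no argument is reproduced here. So there is nothing in the present paper to compare your approach against.

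On its own merits: your treatment of well-definedness and of surjectivity is the standard obstruction-theoretic induction and is correct. At each stage the difference $R_N$ is a $1$-cocycle, the hypothesis $H^1(\mathfrak g,A)=\mathbb C[\Phi_1]$ lets you split off a multiple of $\Phi_1$ (absorbed into $c_N$) plus a coboundary (killed by conjugation by $1-t^Nb$), and the $t$-adic convergence of the accumulated conjugations is clear.

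Your injectivity argument, however, is not a proof but a plan with a declared gap. You correctly isolate the issue: one must show that the correction term $\sum_{i=1}^{N-1}[\Phi^c_i(X),a_{N-i}]$ is a coboundary, equivalently that the natural action of the centralizer $H^0(\mathfrak g,A)$ on $H^1(\mathfrak g,A)$ is trivial. You do not prove this in the stated generality. The proposed route---factor $a=bw$ and extend the truncation $b$ to a genuine self-equivalence $v$ of $\Phi^c$---is itself an existence statement that needs justification; nothing you have written explains why such an extension exists when $H^0$ is large. Your fallback (``in the situation of interest the centralizer reduces to the scalars'') is adequate for the application in this paper, since for the minimal realization of $sl(n+1,\mathbb C)$ acting on $\mathbb C[p,q]$ the $\ast$-centralizer of the image is indeed $\mathbb C$, but it is an additional hypothesis, not a consequence of the ones stated. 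As written, then, your argument establishes surjectivity in full and injectivity only under the extra assumption that $H^0(\mathfrak g,A)$ acts trivially on $H^1(\mathfrak g,A)$ (for instance when the centralizer is the scalars).
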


Note that the preceding definitions and results can be applied to the particular case of a representation $\varphi$ of $\mathfrak g$ in a complex vector space $V$, since $\varphi$
is also a Lie algebra homomorphism from $\mathfrak g$ to $\End(V)$, or, more generally, to
a subalgebra $A$ of $\End(V)$.

\section {Weyl correspondence and Moyal star product}  \label{sec:5}
 
Here we first recall the Moyal star product, see for instance \cite{AC}. Take coordinates
$(p,q)$ on ${\mathbb R}^{2n}\cong  {\mathbb R}^{n}\times  {\mathbb R}^{n}$ and let
$x=(p,q)$. Then one has $x_i=p_i$ for $1\leq i \leq n$ and $x_i=q_{i-n}$ for $n+1\leq i\leq 2n$. For $u,v\in C^{\infty}({\mathbb R}^{2n})$, define $P^0(u,v):=uv$,
\begin{equation*}P^1(u,v):=\sum_{k=1}^n\left( \frac{\partial u}{\partial p_k}
\frac{\partial v}{\partial q_k}-\frac{\partial u}{\partial q_k}
\frac{\partial v}{\partial p_k}\right)=\sum_{1\leq i,j\leq n}\Lambda^{ij}{\partial_{x_i}}u
{\partial_{x_j}}v \end{equation*}
(the Poisson brackets) and, more generally, for $l\geq 2$,
\begin{equation*}P^l(u,v):=\sum_{1\leq i_1,\ldots, i_l,j_1,\ldots,j_l\leq n}
\Lambda^{i_1j_1}\Lambda^{i_2j_2}\cdots \Lambda^{i_lj_l}\partial^l_{x_{i_1}\ldots x_{i_l}}u \,\partial^l_{x_{j_1}\ldots x_{j_l}}v.\end{equation*}

Then the Moyal product $\ast_M$ is the following formal deformation of the pointwise multiplication of $C^{\infty}({\mathbb R}^{2n})$
\begin{equation*}u \ast_M v:=\sum_{l\geq 0}\frac{t^l}{l!}P^l(u,v)\end {equation*}
where $t$ is a formal parameter.
Moreover, the corresponding Moyal brackets are given by
\begin{equation*}[u, v]_{\ast_M }:=\frac{1}{2t}(u \ast_M v-v \ast_M u)
=\sum_{l\geq 0}\frac{t^{2l}}{(2l+1)!}P^{2l+1}(u,v).\end {equation*}

Now, we restrict $\ast_M$ to polynomials on ${\mathbb R}^{2n}$ and take $t=-i/2$.
Then we get an associative product $\ast$ on polynomials which we denote by $\ast$. This product corresponds to the composition of operators in the usual Weyl quantization procedure as we will explain below.

The Weyl correspondence on ${\mathbb R}^{2n}$ is defined as follows, see \cite{CombR},
\cite{Fo}, \cite{Ho}. 
For each $f$ in the Schwartz space ${\mathcal S}({\mathbb
R}^{2n})$, we define the operator $ W(f)$ acting on the Hilbert
space $L^2({\mathbb R}^{n})$ by \begin{equation*}W(f)\varphi
(p)={(2\pi)}^{-n}\,\int_{{\mathbb R}^{2n}}\, e^{isq} \,f( p+(1/2)s,
q)\,\varphi (p+s)\,ds\,dq.
\end{equation*}

As it is well-known, that the Weyl calculus can be extended to much larger
classes of symbols (see for instance \cite{Ho}). In particular, if $f(p,q)=u(p)q^{\alpha}$ where
$u\in C^{\infty}({\mathbb R}^n)$ then we have
\begin{equation}\label{eq:W} W(f)\varphi (p)=\left(i\frac {\partial }{ \partial
s}\right)^{\alpha} \left( u(p+(1/2)s)\,\varphi (p+s) \right) \Bigl
\vert _{s=0},\end{equation}  see \cite{Vo}. For instance, if
$f(p,q)=u(p)$ then $ W(f)\varphi (p)=u(p)\,\varphi (p)$ and if $
f(p,q)=u(p)q_k$ then
\begin{equation}\label{eq:Wbis}W(f)\varphi (p)=i \bigl((1/2){\partial_k  u }(p)\,\varphi (p)
+u(p) {\partial _k  \varphi }(p)\bigr). \end{equation}

Moreover, we have $W(f_1\ast f_2)=W(f_1)W(f_2)$ for each functions $f_1,f_2$
on ${\mathbb R}^{2n}$
of the form $u(p)q^{\alpha}$, in particular for polynomials, see \cite{Fo}, p. 103.

Note also that, since the map $W$ and the product $\ast$ on polynomials can be defined in a purely
algebraic way, see Eq. \ref{eq:W}, we can extended them to the polynomials
in complex variables $p,q$ without any modification.

\section {Minimal realization}  \label{sec:6}

In \cite{ABC}, a general method for constructing minimal realizations
of semisimple complex Lie algebras from minimal coadjoint orbits was introduced. In the particular case
of the Lie algebra ${\mathfrak g}:=sl(n+1, {\mathbb C})$ of $G:=SL(n+1, {\mathbb C})$,
this method goes as follows.

First, we can identify the dual ${\mathfrak g}^{\ast}$ of ${\mathfrak g}$ with ${\mathfrak g}$ by means of the
bilinear form on ${\mathfrak g}$ defined by $\langle X, Y\rangle:= \Tr(XY)$. In this identification, the coadjoint action of $G$ corresponds to the adjoint action of $G$ and the
coadjoint orbits to the adjoint orbits.

This is a simple exercice to show that the minimal (non trivial) nilpotent (co)adjoint orbit
$\mathcal O$ of $G$ consists of all rank one matrices of ${\mathfrak g}$.

Now, let us consider the map $\Psi$ from ${\mathbb C}^{2n}$ to ${\mathcal O}':={\mathcal O}\cup (0)$ defined by
\begin{equation*} \Psi(p,q):=\begin{pmatrix}
-\sum_{j=1}^np_jq_j&q_1&\dots&q_n\\
-p_1\sum_{j=1}^np_jq_j&p_1q_1&\dots&p_1q_n\\
\hdotsfor[3]{4}\\
-p_n\sum_{j=1}^np_jq_j&p_nq_1&\dots &p_nq_n\\
\end{pmatrix}.\end{equation*}

Then the image of $\Psi$ is a dense open subset of ${\mathcal O}'$.

For each $X\in {\mathfrak g}$, let us denote by $\tilde {X}$ the corresponding coordinate function on
${\mathbb C}^{2n}$:
\begin{equation*}\tilde {X}(p,q):=\langle  \Psi(p,q), X\rangle. \end{equation*}

\begin{proposition}\begin{enumerate}
\item For each $X,Y\in  {\mathfrak g}$, we have
\begin{equation*}[\tilde {X},\tilde {Y}]_{\ast}=\{\tilde {X},\tilde {Y}\}=\tilde{[X,Y]}.
\end{equation*}
\item The map $\rho_0:X\rightarrow W(i\tilde {X})$ is a representation of ${\mathfrak g}$
in $\mathcal P$.
\end{enumerate}\end{proposition}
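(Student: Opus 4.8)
The plan is to prove the two assertions in sequence, with the first doing almost all of the work and the second following by a clean transport-of-structure argument. The key conceptual point is that the coordinate functions $\tilde{X}$ are, up to the identification ${\mathfrak g}^\ast \cong {\mathfrak g}$, the components of the momentum map for the coadjoint (adjoint) action, so the Poisson bracket relation $\{\tilde{X},\tilde{Y}\} = \tilde{[X,Y]}$ should hold on the coadjoint orbit essentially by the defining property of the Kirillov--Kostant bracket.

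For the first assertion, I would proceed in two layers. First, the middle equality $\{\tilde{X},\tilde{Y}\} = \tilde{[X,Y]}$: writing $\tilde{X}(p,q) = \langle \Psi(p,q), X\rangle = \Tr(\Psi(p,q)X)$, I would compute the Poisson bracket $\{\tilde{X},\tilde{Y}\} = P^1(\tilde{X},\tilde{Y})$ directly from the explicit entries of $\Psi$. Since each entry of $\Psi$ is a low-degree polynomial in $p,q$ (the first column is quadratic/cubic, the remaining entries are quadratic), this is a finite computation. The cleanest route is to verify the relation on the basis $H_k, E_{ij}$ of ${\mathfrak g}$, using that $\tilde{E_{ij}}$ is just the $(j,i)$-entry of $\Psi$; one checks that the symplectic coordinates $(p,q)$ realize the Darboux chart making $\Psi$ a Poisson map from $({\mathbb C}^{2n}, \{\cdot,\cdot\})$ onto $\mathcal O$ with its linear Poisson structure. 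Second, the left equality $[\tilde{X},\tilde{Y}]_\ast = \{\tilde{X},\tilde{Y}\}$: by the definition of the Moyal bracket recalled in Section \ref{sec:5}, we have $[\tilde{X},\tilde{Y}]_\ast = \sum_{l\geq 0}\frac{t^{2l}}{(2l+1)!}P^{2l+1}(\tilde{X},\tilde{Y})$, so it suffices to show $P^{2l+1}(\tilde{X},\tilde{Y}) = 0$ for all $l\geq 1$. Here I would exploit the bilinearity in $X,Y$ to reduce to basis elements, and then observe that $\tilde{E_{k+1\,1}}$ is linear in $q$ while $\tilde{E_{1\,k+1}}$ is linear in $p$, and the remaining $\tilde{X}$ are at most quadratic; the higher brackets $P^3$ and beyond involve products of three or more mixed derivatives, and a degree count shows they vanish on these polynomials. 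This degree-counting verification that all higher Moyal terms die is the step I expect to be the main obstacle, since it must be checked uniformly over all basis pairs and the cubic term in the first column of $\Psi$ needs care.

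For the second assertion, I would argue as follows. Because $W(f_1 \ast f_2) = W(f_1)W(f_2)$ for polynomials (Section \ref{sec:5}) and $W$ is linear, the map $f \mapsto W(f)$ is an algebra homomorphism from $({\mathcal P}, \ast)$ to $\End({\mathcal P})$ with its composition product. Hence for $X,Y\in{\mathfrak g}$,
\begin{equation*}
[\rho_0(X),\rho_0(Y)] = W(i\tilde{X})W(i\tilde{Y}) - W(i\tilde{Y})W(i\tilde{X}) = W\bigl((i\tilde{X})\ast(i\tilde{Y}) - (i\tilde{Y})\ast(i\tilde{X})\bigr).
\end{equation*}
Now $(i\tilde{X})\ast(i\tilde{Y}) - (i\tilde{Y})\ast(i\tilde{X}) = -(\tilde{X}\ast\tilde{Y} - \tilde{Y}\ast\tilde{X}) = -2t[\tilde{X},\tilde{Y}]_\ast$, and with the specialization $t = -i/2$ this equals $i[\tilde{X},\tilde{Y}]_\ast = i\,\tilde{[X,Y]}$ by the first assertion. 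Therefore $[\rho_0(X),\rho_0(Y)] = W(i\,\tilde{[X,Y]}) = \rho_0([X,Y])$, which is exactly the homomorphism property. I would close by remarking that $W(i\tilde{X})$ indeed preserves $\mathcal P$: since each $\tilde{X}$ has the form $u(p)q^\alpha$ with $u$ polynomial, formula \ref{eq:W} shows $W(i\tilde{X})$ is a polynomial-coefficient differential operator, hence maps $\mathcal P$ into itself, so $\rho_0$ is a genuine representation on $\mathcal P$.
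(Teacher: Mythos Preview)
Your overall strategy matches the paper's: verify $\{\tilde X,\tilde Y\}=\tilde{[X,Y]}$ by direct computation, then argue that the higher Moyal terms $P^{2l+1}$ vanish so that $[\tilde X,\tilde Y]_\ast=\{\tilde X,\tilde Y\}$, and finally push through $W$ using $W(f_1\ast f_2)=W(f_1)W(f_2)$. Part (2) of your argument is essentially identical to the paper's.

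Where you diverge slightly is in the vanishing argument for $P^k$, $k\geq 3$. You propose a case-by-case degree count on basis pairs and flag the cubic entries of $\Psi$ as the delicate point; along the way you assert that $\tilde E_{1\,k+1}$ is linear in $p$, which is incorrect (in fact $\tilde E_{1\,k+1}=-p_k\sum_j p_jq_j$ is quadratic in $p$). The paper bypasses this case analysis with a single uniform observation: every entry of $\Psi$, hence every $\tilde X$, is a polynomial of degree $\leq 1$ in the variables $q_1,\ldots,q_n$. From this it follows immediately that in any nonzero term of $P^k(\tilde X,\tilde Y)$ the number of $q$-derivatives hitting $\tilde X$ is at most $1$ and the number hitting $\tilde Y$ is at most $1$; since in $P^k$ these two counts sum to $k$, one gets $P^k(\tilde X,\tilde Y)=0$ for all $k\geq 3$, with no need to track the $p$-degree at all. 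Adopting this observation would repair the small inaccuracy and streamline your argument to match the paper's.
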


\begin{proof} (1) Let $X$ and $Y$ in ${\mathfrak g}$. The equation $\{\tilde {X},\tilde {Y}\}=\tilde{[X,Y]}$ can be verified by a direct computation. On the other hand, since $\tilde {X}$ and $\tilde {Y}$ are polynomials of degree $\leq 1$ in the variables $q_1,q_2,\ldots,q_n$, we have $P^k(\tilde {X},\tilde {Y})=0$ for each $k\geq 3$, hence we get $[\tilde {X},\tilde {Y}]_{\ast}=\{\tilde {X},\tilde {Y}\}$.

(2) Let $X$ and $Y$ in ${\mathfrak g}$. By (1), we have
\begin{equation*}(i\tilde {X})\ast (i\tilde {Y})-(i\tilde {Y})\ast (i\tilde {X})=i
\tilde{[X,Y]}.\end{equation*}
Then, by the remark at the end of Section \ref{sec:5}, we get $[W(i\tilde {X}),W(i\tilde {Y})]=W(i
\tilde{[X,Y]})$ hence the result.
\end{proof}

The representation $\rho_0$ is a minimal realization of ${\mathfrak g}$, that is, a realization
of ${\mathfrak g}$ as Lie algebra of differential operators acting on functions of $n$ variables with $n$ minimal, see \cite{Jo74}.

Note that \begin{equation*}\Span\{E_{n+1 2}, \ldots, E_{n+1 n}, E_{21}, \ldots, E_{n+1 1}\}\end{equation*}
is a Heisenberg Lie algebra of dimension $2n-1$ with central element $E_{n+1 1}$,
the only non trivial brackets being $[E_{n+1 k}, E_{k 1}]=E_{n+1 1}$ for $k=2,\ldots, n$.
Then $\Psi$ was chosen so that $\tilde{E}_{n+1 k}=p_{k-1}q_n, \tilde{E}_{k 1 }=q_{k-1}$
(for $k=2,\ldots, n$) and  $\tilde{E}_{n+1 1}=q_n$. In fact, these conditions determine $\Psi$ uniquely.

Now, we aim to study the deformations of $\rho_0$. By using the map $f\rightarrow W(if)$
this is equivalent to studying the deformations of the Lie algebra homomorphism
$X\rightarrow \Phi_0(X):=\tilde{X}$ from $\mathfrak g$ to $M:={\mathbb C}[p,q]$ endowed
with $[\cdot,\cdot]_{\ast}$.

As explained in Section \ref{sec:4}, we endow $M$ with the $\mathfrak g$-module structure defined by $X\cdot f:=[\tilde{X},f]_{\ast}$ and then consider the corresponding Chevalley-Eilenberg cohomology.

\section {Determination of $H^1({\mathfrak g},M)$}  \label{sec:7}

Recall that $H^1({\mathfrak g},M)$ is the quotient space $Z^1({\mathfrak g},M)/
B^1({\mathfrak g},M)$ where $Z^1({\mathfrak g},M)$ consists of all linear maps
$\varphi: {\mathfrak g}\rightarrow M$ satisfying
\begin{equation} \label{eq:cocy}\partial \varphi (X,Y):=[\tilde{X},\varphi(Y)]_{\ast}+[\varphi(X),\tilde{Y}]_{\ast}-\varphi [X,Y ]=0\end{equation}
(the $1$-cocycles) and $B^1({\mathfrak g},M)$ consists of all maps from ${\mathfrak g}$ to $M$ of the form
$X\rightarrow [\tilde{X},f]_{\ast}$ for $f\in M$ (the $1$-coboundaries).

The aim of this section is to compute $H^1({\mathfrak g},M)$. We begin with the following 'Poincar\'e lemma'.

\begin{lemma} \label{lemPoinc} Let $F_i(q)$, $i=1,2,\ldots, n$ be a family of polynomials in the variable $q=(q_1,q_2,\ldots, q_n)$ such that, for each $i,j=1,2,\ldots, n$, one has $\frac{\partial F_i}{\partial q_j}=\frac{\partial F_j}{\partial q_i}$. Then there exists a polynomial $F(q)$ such that $\frac{\partial F}{\partial q_i}=F_i$ for each $i=1,2,\ldots, n$.\end{lemma}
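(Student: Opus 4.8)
The plan is to use the classical Poincar\'e homotopy formula, arranged so that polynomiality is manifest. The hypothesis $\partial F_i/\partial q_j=\partial F_j/\partial q_i$ says exactly that the $1$-form $\omega:=\sum_{i=1}^n F_i\,dq_i$ is closed, and I want to produce a polynomial primitive $F$ with $dF=\omega$. First I would set
\begin{equation*}
F(q):=\int_0^1 \sum_{i=1}^n q_i\,F_i(tq)\,dt,
\end{equation*}
the radial homotopy operator applied to $\omega$. Because each $F_i$ is a polynomial, writing $F_i(q)=\sum_\alpha c_{i,\alpha}q^\alpha$ gives $q_iF_i(tq)=\sum_\alpha c_{i,\alpha}\,t^{|\alpha|}\,q_iq^\alpha$, and $\int_0^1 t^{|\alpha|}\,dt=1/(|\alpha|+1)$, so
\begin{equation*}
F(q)=\sum_{i,\alpha}\frac{c_{i,\alpha}}{|\alpha|+1}\,q_iq^\alpha
\end{equation*}
is again a polynomial. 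One could equally take this closed form as the \emph{definition}, bypassing the integral entirely, which is convenient here since the variables are complex.

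Next I would verify $\partial F/\partial q_k=F_k$ for each $k$. Differentiating under the integral and using the chain rule $\partial_{q_k}[F_i(tq)]=t\,(\partial_k F_i)(tq)$ gives
\begin{equation*}
\frac{\partial F}{\partial q_k}(q)=\int_0^1\Bigl(F_k(tq)+t\sum_{i=1}^n q_i\,(\partial_k F_i)(tq)\Bigr)\,dt.
\end{equation*}
This is the only place the hypothesis enters: replacing $\partial_k F_i$ by $\partial_i F_k$ turns the integrand into $F_k(tq)+t\,\tfrac{d}{dt}F_k(tq)=\tfrac{d}{dt}\bigl[t\,F_k(tq)\bigr]$, so the integral telescopes to $[t\,F_k(tq)]_{t=0}^{t=1}=F_k(q)$, which is precisely the desired identity.

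I do not expect a genuine obstacle: the statement is just the polynomial avatar of the Poincar\'e lemma, and the only thing to watch is that the integration stays inside the polynomial ring, which it does because $\int_0^1 t^m\,dt$ is always the rational number $1/(m+1)$ and introduces no denominators depending on $q$. An alternative, entirely algebraic route is induction on $n$: integrate $F_n$ in $q_n$ to obtain a first candidate, then use the closedness relations to show the remaining discrepancy is independent of $q_n$, and apply the inductive hypothesis in the variables $q_1,\ldots,q_{n-1}$. I would only fall back on this if one wishes to avoid the parametrized integral altogether.
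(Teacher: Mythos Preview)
Your argument is correct. The paper's own proof is much terser: it simply invokes the classical Poincar\'e lemma for polynomials in real variables and then remarks that the complex-variable case follows. In substance both proofs rest on the same mechanism---your explicit radial primitive \emph{is} the standard proof of what the paper is quoting---so the underlying approach is the same. What your version buys is self-containment: by writing the closed-form
\[
F(q)=\sum_{i,\alpha}\frac{c_{i,\alpha}}{|\alpha|+1}\,q_i q^\alpha
\]
you make both the polynomiality of $F$ and the passage to complex variables transparent in one stroke, whereas the paper's one-line reduction leaves those as implicit consequences of the cited real result.
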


\begin{proof}By the usual Poincar\'e lemma, the result is true for polynomials in real variables $q_i$ which implies that it is also true for polynomials in complex variables $q_i$.
\end{proof}

\begin{proposition}\label{prop:H} The space $H^1({\mathfrak g},M)$ is one dimensional, generated by the class of the cocycle $\varphi_1$ defined by $\varphi_1(E_{11}-E_{22})=1$, $\varphi_1(E_{kk}-E_{k+1 k+1})=0$ for $k=2,\ldots,n$, $\varphi_1(E_{1 k+1})=p_k$ for $k=1,2,\ldots,n$ and $\varphi_1(E_{ij })=0$ for $i\geq 2$. \end{proposition}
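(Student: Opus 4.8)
The plan is first to use the Cartan subalgebra $\mathfrak h$ to reduce the whole problem to a finite, explicit computation. The cochain complex $C^{\bullet}(\mathfrak g, M)$ carries the natural $\mathfrak h$-action (coadjoint on $\Lambda^{\bullet}\mathfrak g^{\ast}$, module action on $M$), and Cartan's formula $\mathcal L_h=\partial\iota_h+\iota_h\partial$ shows that $\mathcal L_h$ acts as zero on $H^{\bullet}(\mathfrak g,M)$; since $\mathfrak h$ acts semisimply, only the weight-zero part survives, so I would compute $H^1$ from the subcomplex of $\mathfrak h$-equivariant cochains. Concretely I would first record the $\mathfrak h$-weights in $M$: since $\tilde H_k=p_kq_k+\sum_j p_jq_j$ is of degree $\le 1$ in $q$ one has $[\tilde H_k,\cdot]_{\ast}=\{\tilde H_k,\cdot\}$, and the monomial $p^aq^b$ has $H_k$-weight $b_k-a_k+|b|-|a|$, whence $M_0=\mathbb C[p_1q_1,\dots,p_nq_n]$. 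Equivariance then forces $\varphi(H_k)\in M_0$, $\varphi(E_{k+1\,1})=q_k\,g_k(pq)$, $\varphi(E_{1\,k+1})=p_k\,h_k(pq)$ and $\varphi(E_{i+1\,j+1})=p_jq_i\,h_{ij}(pq)$ with $g_k,h_k,h_{ij}\in M_0$.

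Next I would exploit the abelian subalgebra $\Span\{E_{k+1\,1}\}$, whose coordinate functions $\tilde E_{k+1\,1}=q_k$ act by the pure Poisson operator $[\tilde E_{k+1\,1},\cdot]_{\ast}=-\partial_{p_k}$. The cocycle identity on the commuting pairs $[E_{i+1\,1},E_{j+1\,1}]=0$ reads $\partial_{p_j}\varphi(E_{i+1\,1})=\partial_{p_i}\varphi(E_{j+1\,1})$, which is exactly the closedness hypothesis of Lemma \ref{lemPoinc} applied in the variables $p$. The lemma produces a potential $f\in M$ with $\partial_{p_k}f=-\varphi(E_{k+1\,1})$, and subtracting the coboundary $X\mapsto[\tilde X,f]_{\ast}$ lets me assume $\varphi(E_{k+1\,1})=0$ for all $k$. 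The brackets $[E_{i+1\,1},E_{1\,j+1}]=E_{i+1\,j+1}$ $(i\neq j)$ and $[E_{k+1\,1},E_{1\,k+1}]=H_k$ then give $\varphi(E_{i+1\,j+1})=-\partial_{p_i}\varphi(E_{1\,j+1})$ and $\varphi(H_k)=-\partial_{p_k}\varphi(E_{1\,k+1})$, so that the whole cocycle is determined by the $n$ functions $\varphi(E_{1\,k+1})=p_kh_k(pq)$. Feeding these back into $[E_{i+1\,j+1},E_{j+1\,1}]=E_{i+1\,1}$ yields $\partial_{p_i}\partial_{p_j}\varphi(E_{1\,j+1})=0$ for $i\neq j$, which forces each $h_k$ to depend on $p_kq_k$ alone.

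The remaining and hardest point is to show that each $h_k$ is actually constant. For this I would use the $\mathfrak{sl}_2$-relation $[H_k,E_{1\,k+1}]=-2E_{1\,k+1}$; after the reductions above its cocycle identity collapses to $[\tilde E_{1\,k+1},\,h_k+(p_kq_k)h_k']_{\ast}=0$. This is the step where the star bracket genuinely exceeds the Poisson bracket, since $\tilde E_{1\,k+1}=-p_k\sum_j p_jq_j$ is quadratic in $p$ and therefore contributes a nonzero $P^3$-term; handling this correction is the main obstacle. I expect it to be cleanest to pass through the Weyl correspondence, where $\rho_0(E_{1\,k+1})=W(i\tilde E_{1\,k+1})=\tfrac{n+1}{2}\,p_k+p_k\sum_j p_j\partial_{p_j}$ is an explicit first-order operator, and to read the condition as the commutation of this operator with $W\bigl(i(h_k+(p_kq_k)h_k')\bigr)$; a direct analysis then shows only the constants $\psi=h_k+(p_kq_k)h_k'$ survive, which (solving $h_k+uh_k'=\text{const}$) forces $h_k$ constant, say $h_k=c_k$. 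Finally the relations $[E_{i+1\,j+1},E_{1\,i+1}]=-E_{1\,j+1}$ give immediately $c_i=c_j$, so all $h_k$ equal one common constant $c$ and $\varphi=c\,\varphi_1$.

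It then remains to verify that $\varphi_1$ is itself a cocycle (a direct check, or a by-product of the derivation) and that its class is nonzero. For nontriviality I would argue: if $\varphi_1=\partial f$, then $\varphi_1(E_{k+1\,1})=-\partial_{p_k}f=0$ forces $f\in\mathbb C[q]$, and consequently $[\tilde H_1,f]_{\ast}=\{\tilde H_1,f\}=q_1\partial_{q_1}f+\sum_m q_m\partial_{q_m}f$ has no constant term, contradicting $\varphi_1(H_1)=-1$. Hence $\varphi_1$ is not a coboundary, and combining this with the uniqueness obtained above shows that $H^1(\mathfrak g,M)$ is one-dimensional, generated by the class of $\varphi_1$.
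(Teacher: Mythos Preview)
Your plan is correct and leads to the same conclusion, but it follows a genuinely different route from the paper's proof. The paper never invokes the Cartan homotopy formula or the reduction to $\mathfrak h$-equivariant cochains; instead it proceeds by a sequence of three coboundary subtractions. First, as you do, it uses Lemma~\ref{lemPoinc} in the $p$-variables to arrange $\varphi(E_{k+1\,1})=0$. It then shows directly that $\varphi(E_{kl})$ (for $k,l\ge 2$) and $\varphi(H)$ lie in $\mathbb C[q]$, uses an Euler-type homogeneity argument together with a \emph{second} application of Lemma~\ref{lemPoinc} (now in the $q$-variables, via the Heisenberg family $E_{n+1\,k+1}$) to kill $\varphi(E_{n+1\,k+1})$, and finally subtracts a coboundary by a polynomial $F(q_n)$ to make $\varphi(H_0)$ constant. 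After these reductions every bracket the paper evaluates involves at least one function of total degree $\le 2$, so the star bracket reduces to the Poisson bracket throughout and the $P^3$-correction never appears. Your approach trades these extra coboundary steps for the structural reduction to weight-zero cochains, which is cleaner, but the price is that your decisive step---the $\mathfrak{sl}_2$ relation $[H_k,E_{1\,k+1}]=-2E_{1\,k+1}$ yielding $[\tilde E_{1\,k+1},\psi]_\ast=0$ with $\psi=h_k+(p_kq_k)h_k'$---genuinely involves the cubic symbol $\tilde E_{1\,k+1}=-p_k\sum_j p_jq_j$ and hence a nonzero $P^3$ contribution. This step is only sketched in your proposal; you should carry it out explicitly (either via the operator $\rho_0(E_{1\,k+1})=\tfrac{n+1}{2}p_k+p_k\sum_j p_j\partial_{p_j}$ as you suggest, or by computing $P^3(\tilde E_{1\,k+1},\psi)$ directly, which is short since only third partials with all indices equal to $k$ survive). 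Note also that your intermediate step ``$\partial_{p_i}\partial_{p_j}\varphi(E_{1\,j+1})=0$ forces $h_k$ to depend on $p_kq_k$ alone'' is vacuous when $n=1$; the conclusion still holds trivially in that case, but you should say so. Your nontriviality argument for $\varphi_1$ is a welcome addition, as the paper's proof does not make this point explicit.
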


\begin{proof} We have divided the proof into several steps. The method of the proof is quite elementary and consists in transforming progressively a given $1$-cocycle to an equivalent one which is more simple by adding suitable $1$-coboundaries.

Let us consider a $1$-cocycle $\varphi:{\mathfrak g}\rightarrow M$.

1) First we apply Eq. \ref{eq:cocy} to $X=E_{ k+1 1}$ and $Y=E_{ l+1 1}$
for $k,l=1,2,\ldots,n$. Writing $\varphi_k=\varphi(E_{ k+1 1})$ for simplicity, we get $\frac{\partial \varphi_k}{\partial p_l}=\frac{\partial \varphi_l}{\partial p_k}$ for each $k,l=1,2,\ldots,n$. Then, by decomposing each $\varphi_k$ as $\varphi_k=\sum_{\alpha}\varphi_k^{\alpha}(p)q^{\alpha}$ with
the usual multi-index notation, we have $\frac{\partial \varphi_k^{\alpha}}{\partial p_l}=\frac{\partial \varphi_l^{\alpha}}{\partial p_k}$ for each $k,l,\alpha$. 

Thus, by Lemma \ref{lemPoinc}, for each $\alpha$ there exists a polynomial $\varphi^{\alpha}(p)$ such that $\frac{\partial \varphi^{\alpha}}{\partial p_k}=
\varphi_k^{\alpha}$ for each $k=1,2,\ldots,n$. 

Now, let $\phi:=\sum_{\alpha}\varphi^{\alpha}(p)q^{\alpha}$. For each $k=1,2,\ldots,n$, we have
\begin{equation*}[\phi, q_k]_{\ast}=\frac{\partial \phi}{\partial p_k}=\varphi_k.
\end{equation*}

Hence, replacing $\varphi$ by the equivalent $1$-cocycle $\varphi-[\phi, \cdot]_{\ast}$, we can always assume that $\varphi(E_{ k+1 1})=0$ for each
$k=1,2,\ldots,n$.

2) We apply Eq. \ref{eq:cocy} to $X=E_{ k l}$, $k,l\geq 2$, $k\not= l$ and $Y=E_{ j+1 1}$, $j=1,2,\ldots,n$. Taking 1) into account, we can immediately see that $\varphi(E_{ k l})$ is a polynomial in the variables $q_1,q_2,\ldots, q_n$.

3) Similarly, applying Eq. \ref{eq:cocy} to $X\in {\mathfrak h}$ and $Y=E_{j+1 1}$,
we verify that $\varphi(X)$ is a polynomial in the variables $q_1,q_2,\ldots, q_n$.

4) Now, we fix $k=1,2,\ldots,n-1$ and we apply Eq. \ref{eq:cocy} to $X=E_{ n+1 k+1}$ and
$Y=\sum_{j=1}^{n-1}(E_{ n+1 n+1}-E_{ j+1 j+1})$. Write $\varphi_k:=\varphi (E_{n+1 k+1})$ for simplicity and recall that $\varphi_k$ is a polynomial in $q_1,q_2,\ldots, q_n$ by 2). Then we see that there exists a polynomial $u_k(q)$ in $q_1,q_2,\ldots, q_n$ such that
\begin{equation}\label{eq:hom}-n\varphi_k=\sum_{j=1}^{n}q_j\frac{\partial \varphi_k}{\partial q_j}+q_nu_k(q).\end{equation}
Let $\varphi_k= \sum_m\varphi_k^m$ and $u_k= \sum_m u_k^m$ be the decompositions
of $\varphi_k$ and $u_k$ into homogeneous polynomials of degree $m$ in $q_1,q_2,\ldots,q_n$ .
Then Eq. \ref{eq:hom} implies that
\begin{equation*}-n\sum_m\varphi_k^m=\sum_m m\varphi_k^m
+q_nu_{k-1}(q)\end{equation*}
and we conclude that, for each $k=1,2,\ldots,n-1$, there exists a polynomial $\psi_k$ in
$q_1,q_2,\ldots, q_n$ such that $\varphi_k=q_n\psi_k$.

Taking $X=E_{n+1 k+1}$ and $Y=E_{n+1 l+1}$ in Eq. \ref{eq:cocy}  for $k,l=
1,2,\ldots,n-1$, we get $\frac{\partial \psi_k}{\partial q_l}=\frac{\partial \psi_l}{\partial q_k}$ for each $k,l$. This implies the existence of a polynomial $\psi$ in $q_1,q_2,\ldots, q_n$ such that $\psi_k=\frac{\partial \psi}{\partial q_k}$ for each $k=1,2,\ldots,n-1$.

Thus, by replacing $\varphi$ by $\varphi-[\cdot,\psi]_{\ast}$, we are led to the case where
$\varphi (E_{ n+1 k+1})=0$ for each $k=1,2,\ldots,n-1$ and the condition $\varphi (E_{k+1 1})=0$ for each $k=1,2,\ldots,n$ is still satisfied.

5) Let $k=1,2,\ldots,n$, $l=1,2,\ldots,n-1$ with $k\not= l$. By applying Eq. \ref{eq:cocy}
to $X=E_{ k+1 l+1}$ and $Y=E_{ n+1 j+1}$ for $j=1,2,\ldots,n-1$, we see that 
$\varphi (E_{ k+1 l+1})$ only depends on $q_n$. Thus, taking into account the equality
\begin{equation*} [E_{ k+1 l+1},E_{ l+1 k+1}]=E_{ k+1 k+1}-E_{ l+1 l+1}\end{equation*} we get $\varphi(E_{ k+1 k+1}-E_{ l+1 l+1})=0$. Hence, applying
Eq. \ref{eq:cocy} to $X=E_{ k+1 k+1}-E_{ l+1 l+1}$ and $Y=E_{ k+1 l+1}$ we obtain
$\varphi (E_{ k+1 l+1})=0$.

Finally, we apply Eq. \ref{eq:cocy} to $X=E_{ j+1 n+1}$ and $Y=E_{ k+1 j+1}$
and we also obtain $\varphi (E_{ k+1 n+1})=0$.

6) Now, take $X\in {\mathfrak h}$ and $Y=E_{ k+1 j+1}$ in Eq. \ref{eq:cocy}.
Then we see that $\varphi(X)$ only depends on $q_n$.

Let $H_0=E_{ 11}-E_{ 22}\in {\mathfrak h}$. Then we can replace $\varphi$ by $\varphi+
[\cdot,F(q_n)]_{\ast}$ for a suitable polynomial $F(q_n)$ so that $\varphi(H_0)$ is a constant which we denote by $a$.

7) Taking $X=E_{ 12}$ and successively $Y=E_{ k+1 1}$, ($k=1,2,\ldots,n$) and
$Y=E_{ n+1 k}$, ($k=2,\ldots,n-1$) in Eq. \ref{eq:cocy}, we see that 
\begin{equation*}\varphi (E_{ 12})=ap_1+f(q_n)\end{equation*}
where $f(q_n)$ is a polynomial.
Moreover, taking also $X=E_{ 12}$ and $Y=H_0$, we get $-2f(q_n)=q_n
\frac {\partial f}{ \partial
q_n}$ hence $f=0$ and $\varphi (E_{ 12})=ap_1$.

8) Finally, we apply Eq. \ref{eq:cocy} to $X=E_{ 12}$ and $Y=E_{ 2 k+1}$
where $k=2,\ldots,n$ we obtain $\varphi (E_{ 1k+1})=ap_k$.

\end{proof}

\section {Recovering $\sigma_m$ and $\pi_m$} \label{sec:8}

In this section, we retain the notation of the previous sections. Proposition \ref{prop:H} leads us to consider the formal deformations $\Phi$ of $\Phi_0:X\rightarrow {\tilde X}$ such that $\Phi_1=a\varphi_1$ for $a\in {\mathbb C}$. We have the following result.

\begin{proposition} For each $a\in {\mathbb C}$, the map $\Phi_a:{\mathfrak g}\rightarrow M[[t]]$ defined by $\Phi_a(X)={\tilde X}+ta\varphi_1(X)$ is a formal deformation of $\Phi_0$ in $M$.\end{proposition}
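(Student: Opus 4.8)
The plan is to verify directly the defining relation of a formal deformation, namely Eq.~\ref{eqDef},
\begin{equation*}\Phi_a([X,Y])=[\Phi_a(X),\Phi_a(Y)]_{\ast},\end{equation*}
for all $X,Y\in{\mathfrak g}$, by expanding both sides as polynomials in the formal parameter $t$ and comparing coefficients. Since $\Phi_a(X)=\tilde{X}+ta\varphi_1(X)$, the left-hand side is $\tilde{[X,Y]}+ta\,\varphi_1([X,Y])$, while bilinearity of the extended bracket on $M[[t]]$ gives on the right
\begin{equation*}[\tilde{X},\tilde{Y}]_{\ast}+ta\bigl([\tilde{X},\varphi_1(Y)]_{\ast}+[\varphi_1(X),\tilde{Y}]_{\ast}\bigr)+t^2a^2[\varphi_1(X),\varphi_1(Y)]_{\ast}.\end{equation*}
It then suffices to match the coefficients of $t^0$, $t^1$ and $t^2$ separately.

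First I would dispose of the $t^0$ and $t^1$ terms, which follow at once from earlier results. The coefficient of $t^0$ is the identity $[\tilde{X},\tilde{Y}]_{\ast}=\tilde{[X,Y]}$, which is exactly part (1) of the Proposition of Section~\ref{sec:6}. Matching the coefficient of $t^1$ amounts to $[\tilde{X},\varphi_1(Y)]_{\ast}+[\varphi_1(X),\tilde{Y}]_{\ast}=\varphi_1([X,Y])$, which is precisely the cocycle condition Eq.~\ref{eq:cocy} applied to $\varphi_1$; it holds because $\varphi_1$ is a $1$-cocycle by Proposition~\ref{prop:H}.

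The hard part — really the only point that needs genuine checking — is the coefficient of $t^2$: since $\Phi_a$ carries no term of order $t^2$, one must show that the quadratic term $[\varphi_1(X),\varphi_1(Y)]_{\ast}$ vanishes identically. The key observation is that, reading off the definition of $\varphi_1$ in Proposition~\ref{prop:H}, every value $\varphi_1(X)$ lies in $\Span\{1,p_1,\ldots,p_n\}$: indeed $\varphi_1$ is constant on $\mathfrak h$, equals $p_k$ on $E_{1\,k+1}$, and vanishes on all $E_{ij}$ with $i\geq 2$. Hence $\varphi_1(X)$ and $\varphi_1(Y)$ are polynomials depending only on the variables $p$.

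To finish I would invoke the structure of the Moyal bracket $[u,v]_{\ast}=\sum_{l\geq 0}\tfrac{t^{2l}}{(2l+1)!}P^{2l+1}(u,v)$. In each $P^l$ with $l\geq 1$ every summand carries a factor $\Lambda^{i_kj_k}$, which forces at least one differentiation with respect to some variable $q_j$ on one of the two arguments. As $\varphi_1(X)$ and $\varphi_1(Y)$ are independent of $q$, every such term is zero, so $P^{2l+1}(\varphi_1(X),\varphi_1(Y))=0$ for all $l\geq 0$ and therefore $[\varphi_1(X),\varphi_1(Y)]_{\ast}=0$. This kills the $t^2$ coefficient and establishes Eq.~\ref{eqDef}. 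Since $\Phi_a$ is by construction a formal series whose zeroth-order term is the homomorphism $\Phi_0:X\mapsto\tilde{X}$ and whose higher coefficient map $a\varphi_1$ is linear, it is a formal deformation of $\Phi_0$, as claimed.
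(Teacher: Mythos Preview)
Your proof is correct and follows essentially the same approach as the paper: expand $\Phi_a([X,Y])=[\Phi_a(X),\Phi_a(Y)]_\ast$ in powers of $t$, use that $\Phi_0$ is a homomorphism for the $t^0$ term, the cocycle condition on $\varphi_1$ for the $t^1$ term, and the vanishing of $[\varphi_1(X),\varphi_1(Y)]_\ast$ for the $t^2$ term. The paper's proof simply asserts this last vanishing; your explanation (that every $\varphi_1(X)$ lies in $\Span\{1,p_1,\ldots,p_n\}$, hence is $q$-independent, so every $P^{2l+1}$ in the Moyal bracket expansion annihilates the pair) is exactly the intended justification and fills in the omitted detail.
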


\begin{proof}Taking into account that $\varphi_1$ is a $1$-cocycle (see Section \ref{sec:7}), the result follows immediately from the equality $[\varphi_1(X),\varphi_1(Y)]_{\ast}=0$ for $X,Y\in {\mathfrak g}$. \end{proof}

By using the properties of $W$ (see Section \ref{sec:5}), we get the following proposition.

\begin{proposition} \label{prop:81} For each $a\in {\mathbb C}$, let $m(a):=-1/2(a+n+1)$.
Then the map $\rho_a$ defined by 
\begin{equation*}\rho_a(X)=W\left({i\tilde X}+\frac{1}{2}a\varphi_1(X)\right)\end{equation*}
for each $X\in {\mathfrak g}$ is a representation of ${\mathfrak g}$ in $\mathcal P$ and we have

\begin{align*}(\rho_a(H_k)f)(z)=&m(a)f(z)-z_k\frac{\partial f}{\partial z_k}-\sum_{j=1}^nz_j\frac{\partial f}{\partial z_j}\\
(\rho_m(E_{1 k+1})f)(z)=&-m(a)z_kf(z)+z_k\sum_{j=1}^n z_j\frac{\partial f}{\partial z_j}\\
(\rho_a(E_{ k+1 1})f)(z)=&-\frac{\partial f}{\partial z_k}\\
(\rho_a(E_{i+1j+1})f)(z)=&-z_j\frac{\partial f}{\partial z_i}\\
\end{align*}
for $1\leq k\leq n$ and $1\leq i\not=j\leq n$.
\end{proposition}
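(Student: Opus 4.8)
The plan is to obtain both assertions at once from the preceding proposition, which exhibits $\Phi_a(X)=\tilde X+ta\varphi_1(X)$ as a formal deformation of $\Phi_0$, combined with the multiplicativity $W(f_1\ast f_2)=W(f_1)W(f_2)$ recalled in Section~\ref{sec:5}. The first thing I would record is that $\rho_a$ is nothing but $\Phi_a$ with the deformation parameter specialized to $t=-i/2$ and then pushed through $W(i\,\cdot\,)$: setting $g_X:=\Phi_a(X)\big|_{t=-i/2}=\tilde X-\tfrac{i}{2}a\varphi_1(X)$ one has $ig_X=i\tilde X+\tfrac12 a\varphi_1(X)$, so that $\rho_a(X)=W(ig_X)$. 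Since the bracket $[\cdot,\cdot]_\ast$ carries no dependence on the deformation parameter and $\Phi_a(X)$ is a polynomial of degree one in $t$, the deformation identity $[\Phi_a(X),\Phi_a(Y)]_\ast=\Phi_a([X,Y])$ is an identity of polynomials in $t$ and may be evaluated at $t=-i/2$ to give $[g_X,g_Y]_\ast=g_{[X,Y]}$.

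For the homomorphism property I would then repeat verbatim the argument used to prove that $\rho_0$ is a representation (part (2) of the proposition in Section~\ref{sec:6}). Because $[\cdot,\cdot]_\ast$ is the Moyal bracket evaluated at $t=-i/2$, one has $u\ast v-v\ast u=-i\,[u,v]_\ast$, hence $(iu)\ast(iv)-(iv)\ast(iu)=i\,[u,v]_\ast$ for all polynomials $u,v$. Applying $W$ and using its multiplicativity yields $[W(iu),W(iv)]=W\!\big(i[u,v]_\ast\big)$; taking $u=g_X$, $v=g_Y$ and substituting $[g_X,g_Y]_\ast=g_{[X,Y]}$ gives $[\rho_a(X),\rho_a(Y)]=W(ig_{[X,Y]})=\rho_a([X,Y])$. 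This proves that $\rho_a$ is a representation of $\mathfrak g$ on $\mathcal P$.

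It remains to establish the four explicit formulas, which is a direct computation. From $\tilde X(p,q)=\Tr(\Psi(p,q)X)$ one gets $\widetilde{E_{ab}}=\Psi_{ba}$, and hence $\widetilde{H_k}=p_kq_k+\sum_{j=1}^n p_jq_j$, $\widetilde{E_{1\,k+1}}=-p_k\sum_{j=1}^n p_jq_j$, $\widetilde{E_{k+1\,1}}=q_k$ and $\widetilde{E_{i+1\,j+1}}=p_jq_i$. From Proposition~\ref{prop:H}, writing $H_k=-\big((E_{11}-E_{22})+\sum_{l=2}^{k}(E_{ll}-E_{l+1\,l+1})\big)$ gives $\varphi_1(H_k)=-1$, while $\varphi_1(E_{1\,k+1})=p_k$ and $\varphi_1$ vanishes on $E_{k+1\,1}$ and on $E_{i+1\,j+1}$. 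I would then form $ig_X=i\tilde X+\tfrac12 a\varphi_1(X)$ for each basis element and apply Eqs.~(\ref{eq:W}) and (\ref{eq:Wbis}) term by term, renaming the Weyl variable $p$ as $z$. The two off-diagonal families are immediate: $ig_{E_{k+1\,1}}=iq_k$ and $ig_{E_{i+1\,j+1}}=ip_jq_i$ with $i\neq j$, so no ordering constant is generated and one reads off $-\partial f/\partial z_k$ and $-z_j\,\partial f/\partial z_i$ respectively.

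The hard part, and really the only place where care is needed, is the bookkeeping of the Weyl ordering constants in the $H_k$ and $E_{1\,k+1}$ cases, where the symbol contains the quadratic expression $\sum_j p_jq_j$. Each application of Eq.~(\ref{eq:Wbis}) to a monomial $p_jq_j$ (respectively $p_kp_jq_j$) produces, besides the expected first-order operator, a scalar $\tfrac12\partial_{p_j}$-contribution; summing over $j=1,\dots,n$ these scalars accumulate to $\tfrac{n}{2}$ in the $H_k$ case and to $\tfrac{n+1}{2}$ in the $E_{1\,k+1}$ case. I would then have to check that, once the extra $\tfrac12$ coming from the $p_kq_k$ term and the $\tfrac12 a$ coming from $\varphi_1$ are added in, these scalars assemble to exactly $m(a)=-\tfrac12(a+n+1)$, and that the term $z_k\,\partial f/\partial z_k$ separates cleanly from the Euler operator $\sum_j z_j\,\partial f/\partial z_j$. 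Once this single constant is verified, the displayed formulas follow, and everything else is forced by the two preceding paragraphs.
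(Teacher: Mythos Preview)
Your proof is correct and follows exactly the approach sketched in the paper: the homomorphism property is deduced from the preceding proposition (that $\Phi_a$ is a formal deformation) via the multiplicativity of $W$, and the explicit formulas are obtained from Eq.~\ref{eq:Wbis}. You have simply spelled out in full the specialization at $t=-i/2$ and the Weyl--ordering bookkeeping that the paper leaves to the reader.
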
 

\begin{proof} The fact that $\rho_a$ is a representation ${\mathfrak g}$ follows from Proposition \ref{prop:81} and the formulas for $\rho_a$ can be easily verified  by Eq. \ref{eq:Wbis}.\end{proof}

Then we immediately see that by applying Proposition \ref{prop:21} we can recover
the representations $\pi_m$ of Section \ref{sec:2}. In order to recover also the representations $\pi_m$ of Section \ref{sec:3}, we select now
the values of $a$ (or, equivalently, of $m(a)$) for which there exists a non trivial
finite dimensional subspace of ${\mathcal P}$ that is invariant under $\rho_a$.  

\begin{proposition} Let $a\in {\mathbb C}$. Assume that ${\mathcal Q}$ is a 
non-trivial
finite dimensional subspace of ${\mathcal P}$ that is invariant under $\rho_a$.
Then $m(a)$ is a non negative integer, we have ${\mathcal Q}={\mathcal P}_{m(a)}$ and the restriction of $\rho_a$ to ${\mathcal Q}$ coincides with $d\pi_{m(a)}$. \end{proposition}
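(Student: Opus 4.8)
The plan is to exploit the weight decomposition of $\mathcal P$ under the Cartan subalgebra $\mathfrak h$, together with the explicit actions of the raising, lowering and degree-preserving generators recorded in Proposition \ref{prop:81}. First I would note that each monomial $z^p$, $p\in{\mathbb N}^n$, is a simultaneous eigenvector of the operators $\rho_a(H_k)$: the first formula of Proposition \ref{prop:81} gives $\rho_a(H_k)z^p=(m(a)-p_k-|p|)z^p$. The map $p\mapsto(m(a)-p_k-|p|)_{1\le k\le n}$ is injective on ${\mathbb N}^n$, since summing the $n$ coordinates determines $|p|$ and then each $p_k$. Thus distinct monomials carry distinct weights, so any subspace invariant under $\mathfrak h$ is the sum of its one-dimensional weight spaces, that is, is spanned by the monomials it contains. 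In particular $\mathcal Q$ is such a monomial subspace.

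Next I would pin down $m(a)$. As $\mathcal Q$ is nonzero and finite dimensional, the monomials it contains have a maximal total degree $d\ge 0$; fix $z^p\in\mathcal Q$ with $|p|=d$. The second formula of Proposition \ref{prop:81} yields $\rho_a(E_{1\,k+1})z^p=(|p|-m(a))z^{p+e_k}=(d-m(a))z^{p+e_k}$, which must lie in $\mathcal Q$. Since $z^{p+e_k}$ is a single monomial of degree $d+1>d$, maximality of $d$ forces $d-m(a)=0$, so $m(a)=d$ is a non-negative integer. This step, identifying the top degree and using the raising operator to block any further ascent, is the conceptual heart of the argument.

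It then remains to fill $\mathcal Q$ up and out. The degree-preserving generators act by $\rho_a(E_{i+1\,j+1})z^p=-z_j\partial_{z_i}z^p=-p_i\,z^{p-e_i+e_j}$; these are, up to sign, the off-diagonal operators $z_b\partial_{z_c}$, which generate the standard action of $sl(n,{\mathbb C})$ on the space $S^d$ of homogeneous polynomials of degree $d$. Since $S^d$ is an irreducible $sl(n,{\mathbb C})$-module and $\mathcal Q$ contains the nonzero degree-$d$ vector $z^p$, I conclude that $\mathcal Q$ contains all of $S^d$. The lowering generators $\rho_a(E_{k+1\,1})=-\partial_{z_k}$ then carry degree-$d$ polynomials onto all of degree $d-1$, because any $z^q$ with $|q|=d-1$ is a nonzero multiple of $\partial_{z_k}z^{q+e_k}$; iterating downward through the degrees gives $\mathcal P_{d}\subseteq\mathcal Q$. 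Conversely, the defining maximality of $d$ says $\mathcal Q$ contains no monomial of degree exceeding $d$, and as $\mathcal Q$ is a monomial subspace this yields $\mathcal Q\subseteq\mathcal P_d$. Hence $\mathcal Q=\mathcal P_{d}=\mathcal P_{m(a)}$.

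Finally, the identification of the restricted representation is immediate: the four formulas of Proposition \ref{prop:81} agree verbatim with the formulas for $d\pi_m$ in Section \ref{sec:3} once $m$ is set equal to $m(a)=d$. Since $m(a)$ is now the non-negative integer $d$, the space $\mathcal P_{m(a)}$ is exactly the domain of $d\pi_{m(a)}$, and therefore $\rho_a|_{\mathcal Q}=d\pi_{m(a)}$. The only points requiring genuine care are the weight-separation argument, which guarantees that $\mathcal Q$ is a monomial subspace, and the irreducibility of $S^d$ under $sl(n,{\mathbb C})$; everything else is routine monomial bookkeeping.
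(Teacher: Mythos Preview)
Your proof is correct. The paper's own argument is briefer and differs in two respects. First, it does not pass through a weight decomposition but works directly with an arbitrary polynomial $f\in\mathcal Q$ of maximal degree $m$: writing $f=\sum_{k=0}^m f_k$ with $f_k$ homogeneous of degree $k$, it computes $\rho_a(E_{1\,l+1})f=p_l\sum_{k}(k-m(a))f_k$ and reads off $m(a)=m$ from the nonvanishing top component. Second, for the equality $\mathcal Q=\mathcal P_{m(a)}$ it simply notes $\mathcal Q\subset\mathcal P_{m(a)}$ from the degree bound and then invokes the irreducibility of $\mathcal P_{m(a)}$ under $d\pi_{m(a)}$ already established in Section~\ref{sec:3}. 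Your route instead reduces to monomials via the Cartan action and rebuilds $\mathcal P_d$ inside $\mathcal Q$ using the $sl(n,\mathbb C)$-irreducibility of each homogeneous piece $S^d$ together with the lowering operators $-\partial_{z_k}$. The paper's version is shorter because it recycles the irreducibility result from Section~\ref{sec:3}; yours is more self-contained and makes the weight structure explicit, at the cost of an extra preparatory step.
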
 

\begin{proof} Let $a\in {\mathbb C}$. Let ${\mathcal Q}\not= (0)$ a 
finite dimensional subspace of ${\mathcal P}$ which is invariant under $\rho_a$.
Define $m:=\max \{\deg (f)\,:\,f\in {\mathcal Q}\setminus (0)\}$. Let $f$ be an element of $\mathcal Q$ of degree $m$. Let us decompose $f$ as
$f=\sum_{k=0}^mf_k$ where, for each $k$, $f_k$ is an homogeneous polynomial
of degree $k$. Then we have $f_m\not= 0$ and
\begin{equation*}\rho_a(E_{1 l+1})f=p_l\sum_{k=0}^m(k-m(a))f_k.\end{equation*}
We see that if $m\not= m(a)$, we get a contradiction. Thus we have $m(a)=m$
hence $m(a)$ is a non negative integer and ${\mathcal Q}\subset {\mathcal P}_{m(a)}$. Since ${\mathcal P}_{m(a)}$ is irreducible under the action of $d\pi_
{m(a)}$, see Section \ref{sec:3}, we can conclude that ${\mathcal Q}={\mathcal P}_{m(a)}$.
\end{proof}

Then we have recovered the representations $d\pi_m$ of Section \ref{sec:3}, hence the representations $\pi_m$ by integration. Note that by taking $n=1$
we see that this method gives all the unitary irreducible representations of $SU(2)$.

We conclude by the following remarks.
\begin{enumerate}
\item In \cite{ABC}, minimal realizations of the classical simple complex lie algebras were constructed by using the Moyal star product. Then we can expect applications
of the method of the present paper to the description of some families of unitary irreducible representations of simple Lie groups.

\item Contraction of representations can be interpreted geometrically as a kind
of convergence of coadjoint orbits associated with representations via the Kirillov-Kostant method of orbits, see \cite{Do} and \cite{CaIllinois}. In the other hand,
deformation of representations can be considered as the inverse process as contraction \cite{FiaM}. It would be then interesting to find a geometrical interpretation of deformation of representations. \end{enumerate}

\end{document}